\newcommand \datum {February 23, 2024}
\numberwithin{equation}{section}
\theoremstyle{plain}
 \newtheorem{theorem}{Theorem}[section]
 \newtheorem{lemma}[theorem]{Lemma}
 \newtheorem{corollary}[theorem]{Corollary}
\theoremstyle{definition}
 \newtheorem{definition}[theorem]{Definition}
 \newtheorem{nextpart}{Part}
 \newtheorem{example}[theorem]{Example}
 \newtheorem{remark}[theorem]{Remark}
 \newtheorem{para}{$\bullet$}
 \newtheorem{tara}{$\scriptscriptstyle\blacksquare$}
\theoremstyle{remark}
\newcommand \Pow [1]{\textup{Pow}(#1)}
\newcommand \restrict[2] {#1\rceil_{\kern-1pt #2}}
\newcommand \strLASp {\textup{LASp}}
\newcommand \LASp[1] {\strLASp(#1)}
\newcommand\Quo[1]{\textup{Quo}(#1)}
\newcommand\Quleq{\textup{Quo}^{\textup{e}}}
\newcommand\Pquleq {\Quleq(\PP)}
\newcommand\RQuleq[1]{\textup{Quo}_{#1}^{\textup e}(\PP)}
\newcommand\Equ[1]{\textup{Equ}(#1)}
\newcommand \filter[1]{\textup{fil}(#1)}
\newcommand \ideal[1]{\textup{idl}(#1)}
\newcommand \pfilter[2]{\textup{fil}_{#1}(#2)}
\newcommand \pideal[2]{\textup{idl}_{#1}(#2)}
\newcommand \Max [1] {\textup{Max}(#1)}
\newcommand \ncs {{\textup{csize}(\PP)}}
\newcommand \ncmp {{\textup{cmp}(\PP)}}
\newcommand \ncedge {{\textup{ce}(\PP)}}
\newcommand \spp  {{\textup{spp}}}
\newcommand \ffunc {f_4}
\newcommand \thn {\textup{thn}(\PP,\bSelc_1,\bSelc_2)}
\newcommand \cop{\textup{cp}(\PP)}
\newcommand \Pdnul {P^{(0)}_{\bSelc_1,\bSelc_2}}
\newcommand \PPdnul {\mathbb P^{(0)}_{\bSelc_1,\bSelc_2}}
\newcommand \Edge [1] {\textup{Edge}(#1)}
\newcommand \iEdge [1] {\textup{Edge}^{-1}(#1)}
\newcommand \Comp [1] {\textup{Comp}(#1)}
\newcommand \Strip{\textup{Stp}(\PP)}
\newcommand\Thread{\textup{Thd}(\PP)}
\newcommand \nab [1] {\nabla_{{\kern-1.5pt #1}}}
\newcommand \enab [1] {\nabla^{+}_{{\kern-1.5pt #1}}}
\newcommand \spech {h}
\newcommand \bup{\beta^{\textup{up}}}
\newcommand \gdn{\gamma_{\textup{dn}}}
\newcommand \Csum {\sum^{\textup{card}}}
\newcommand \cplus {\mathrel{+^{\textup{card}}}}
\newcommand \nothing[1]{}
\newcommand \Min [1] {\textup{Min}(#1)}
\newcommand \str[1] {#1^-}
\newcommand \mbigvee {\bigvee{}\kern-2pt^\mu}
\newcommand \rtr {\textup{rtr}}
\newcommand \quo {\textup{quo}}
\newcommand \qum {{\textup{qu}_{\mu}}}
\newcommand \Selc {M}
\newcommand \bSelc {\mathbb M}
\newcommand \SSelc {M^{\textup{sl}}}
\newcommand \ul[1]{\underline{#1}}
\renewcommand \phi{\varphi}
\newcommand \Nnul {\mathbb N_0}
\newcommand \Nplu {\mathbb N^+}
\newcommand \then {\Rightarrow}
\newcommand{\tbf}{\textbf}
\newcommand{\set}[1]{\{#1\}}
\newcommand \PP {{\mathbb P}}
\newcommand \bbA {{\mathbb A}}
\newcommand \XX {{\mathbb X}}
\newcommand \TT {{\mathbb T}}
\newcommand \red[1]{{\textcolor{red}{#1}\color{black}}}
\newcommand \teal[1]{{{\textcolor{teal}#1}}}
\newcommand \tul[1] {\begin{tara}#1 \end{para}}
\newcommand \atul[2] {\begin{t0ara}[{\teal{{about your hint ``#1{}''}}}] {#2} \end{tara}}
\begin{document}

\title[Quasiorder lattices]
{Large filters of quasiorder lattices can be generated by few elements}

\author[G.\ Cz\'edli]{G\'abor Cz\'edli}
\email{czedli@math.u-szeged.hu}
\urladdr{http://www.math.u-szeged.hu/~czedli/}
\address{University of Szeged, Bolyai Institute. 
Szeged, Aradi v\'ertan\'uk tere 1, HUNGARY 6720}

\begin{abstract} 
For a poset $(P;\leq)$, the quasiorders (AKA preorders) extending the poset order ``$\leq$''  form a complete lattice $F$, which is a filter in the lattice of all quasiorders of the set $P$. We prove that if the poset order ``$\leq$'' is small, then $F$ can be generated by few elements. 
\end{abstract}

\thanks{This research was supported by the National Research, Development and Innovation Fund of Hungary, under funding scheme K 138892.}

\subjclass {06B99\hfill{\red{\tbf{\datum}}}}

\dedicatory{Dedicated  to Professor Vilmos Totik on his retirement (2023) and
forthcoming seventieth birthday (2024); see also Remark \ref{rem:totik} in Section \ref{sect:dedic}.}

\keywords{Quasiorder lattice, lattice of preorders, small generating set,  minimum-sized generating set,
four-generated lattice,  complete lattice.}

\maketitle

\section{Introduction}\label{S:Introduction}
Before stating the main result,  Theorem \ref{thmmain}, we survey the previously known results on generation of quasiorder lattices. 
First of all, we recall some concepts and notations. 
Reflexive and transitive relations of a set $X$ are called  \emph{quasiorders} or (in a significant portion of the literature) \emph{preorders} (of $X$). For a  set $X$, we will denote by $\Quo X$ the complete lattice of quasiorders of $X$; the lattice order ``$\leq$'' in $\Quo X$ is the set inclusion relation ``$\subseteq$''. 
A subset $Y$ of a complete lattice $L$ is a \emph{complete generating set} if no proper complete sublattice of $L$ includes $Y$. Furthermore, we say that $L$ is \emph{$n$-generated as a complete lattice} if $L$ has an at most $n$-element
complete generating set.
For a finite lattice, this concept is equivalent to the existence of an at most $n$-element generating set in the usual sense. E.g., if $|L|\leq n$, then $L$ is  $n$-generated.
As a convention for this paper, when we write that  ``generated'', we always mean ``generated as a complete lattice'' even when this is not emphasized.

To recall the first result on small generating sets of $\Quo X$ from   Chaj\-da and Cz\'edli \cite{chajdaczg}, let  $\beth_0:=\aleph_0$ and, for any integer $n\in\Nnul=\set{0,1,2,\dots}$, let $\,\beth_{n+1}:=2^{\beth_n}$. Let $\beth_\omega:=\sup\set{\beth_n:n\in\Nnul}$. With these notations, if $|X|\leq \beth_\omega$, then $\Quo X$ is 6-generated (but note that the six-element generating set given in \cite{chajdaczg} has an additional property).

A cardinal number $\eta>\aleph_0$ is \emph{strongly inaccessible} if  for any cardinal $\lambda$, \ $\lambda<\eta\then 2^\lambda<\eta$ and, in addition, for any set $I$ of cardinals smaller than $\eta$, \ $|I|<\eta \then \sup I<\eta$.
We call a cardinal number $\lambda$ \emph{accessible} if there exists no strongly inaccessible cardinal number $\eta$ such that $\eta\leq \lambda$.
For example, the finite cardinals and $\beth_\lambda$ for $\lambda\leq \omega$ are accessible. 
We know from Kuratowski's result, see \cite{kuratowski} and see also Levy \cite{levy} for a secondary source that if ZFC has a model, then it also has a model in which all cardinal numbers are accessible.

As the next step after Chajda and Cz\'edli \cite{chajdaczg}, Tak\'ach \cite{takach} extended the result of  \cite{chajdaczg} to every set $X$ of accessible cardinality.
About two decades later, Dolgos \cite{dolgos} proved that $\Quo X$ is 5-generated for any set $X$ with size (=cardinality) $|X|\leq \aleph_0$, and Kulin \cite{kulin} extended this result by proving that $\Quo X$ is 5-generated for any set $X$ with an accessible cardinality. Not much later, with the exception of $|X|\in\set{4, 6, 8, 10}$, Cz\'edli \cite{czg2017fourgen} proved that if $X$ is a set with $|X|\leq \aleph_0$, then $\Quo X$ is 4-generated. Furthermore, 
 \cite{czg2017fourgen} also proved that for any set $X$ with at least three elements, $\Quo X$ is not 3-generated. 
Soon thereafter, Cz\'edli and Kulin \cite[Theorem 3.5]{czgkulin} reduced the number of exceptions and permitted all infinite sets $X$ with accessible cardinalities. We summarize these results  as follows.

\begin{lemma}[Cz\'edli \cite{czg2017fourgen}, Cz\'edli and Kulin \cite{czgkulin}]
\label{lemma:czkLn}
If $X$ is an at least two-element  set of an accessible cardinality and this cardinality is different from $4$, then the complete lattice $\Quo X$ is $4$-generated as a complete lattice. For $|X|\geq 3$,  $\Quo X$, as a complete lattice, is not $3$-generated. For $|X|=4$, $\Quo X$ is $5$-generated.
\end{lemma}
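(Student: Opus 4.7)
The statement packages three independent claims (positive $4$-generation for $|X|\neq 4$, negative $3$-generation for $|X|\geq 3$, and $5$-generation for $|X|=4$), and each calls for a different tactic. I would therefore treat them in sequence.

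For the positive $4$-generation, the plan is to fix, using the accessibility of $|X|$, a well-ordering of $X$ and to exhibit four concrete quasiorders $\alpha_1,\alpha_2,\alpha_3,\alpha_4\in\Quo X$. The target is to recover inside the generated complete sublattice every \emph{principal} quasiorder $\rho_{a,b}:=\{(x,x):x\in X\}\cup\{(a,b)\}$, because then an arbitrary $\sigma\in\Quo X$ appears automatically as the complete join of the $\rho_{a,b}$ with $(a,b)\in\sigma$. The $\alpha_i$ should therefore carry enough ``indexing structure'' to single out an arbitrary ordered pair; a standard design is to use a zigzag or snake chain plus a two-block quasiorder to address elements of $X$, together with a fourth quasiorder that supplies the bare arrow which, after being meeted and joined with the address-selectors, yields $\rho_{a,b}$. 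The infinite case is handled by transfinite recursion along the well-ordering, the accessibility condition guaranteeing that all relevant suprema stay inside the sublattice. The main obstacle, and the reason why $|X|=4$ must be excluded, is to make the small-size combinatorics work: the refinement of Czédli and Kulin \cite{czgkulin} over Czédli \cite{czg2017fourgen} consists precisely in a subtler choice of the $\alpha_i$ that accommodates all but this one case.

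For the negative assertion I would exploit the strong symmetry of $\Quo X$: the complete lattice automorphism group is isomorphic to the symmetric group $\mathrm{Sym}(X)$, which is very large compared to what a $3$-element generating set can accommodate. More precisely, any $\phi\in\mathrm{Aut}(\Quo X)$ sends a would-be generating triple to another generating triple, and since an automorphism that fixes a generating set is the identity, an orbit--stabilizer count relates $|\mathrm{Sym}(X)|$ to (number of generating triples)$\,\times\,6$. For $|X|$ large this crude counting already closes; for the small cases $|X|=3,4$ it does not, and there I would supplement the argument by exhibiting a specific quasiorder whose expressibility by a $3$-term would force a lattice identity that $\Quo X$ demonstrably violates (for instance, by restricting attention to a three-element subset of $X$ and examining how the retracted generators could interact). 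I expect this small-$|X|$ case to be the genuine technical bottleneck.

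For $|X|=4$ the $5$-generation is a finite verification: one writes down an explicit five-element set of quasiorders and checks, by a tedious but mechanical walk through the $355$-element lattice $\Quo X$, that the sublattice they generate is the whole $\Quo X$; whether four quasiorders also suffice is left open by the present lemma.
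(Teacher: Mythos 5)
The paper does not prove this lemma: it merely recalls it, with citations to Cz\'edli \cite{czg2017fourgen} and Cz\'edli--Kulin \cite{czgkulin}, so there is no in-paper argument for your attempt to match. Judged on its own merits, your non-$3$-generation argument is broken, and the error runs in the opposite direction from what you anticipate. Orbit--stabilizer bounds the size of a \emph{single} orbit of (hypothetical) generating triples by $|\mathrm{Aut}(\Quo X)|$; it places no upper bound on the \emph{total} number of such triples, which can a priori be of order $|\Quo X|^3$. For finite $X$, $|\Quo X|$ grows super-exponentially while $|\mathrm{Sym}(X)|=|X|!$, so the disparity that would be needed for a counting contradiction only worsens as $|X|$ grows; for infinite $X$ both cardinals equal $2^{|X|}$, and cardinal division does not even make sense. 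Hence the claim that ``for $|X|$ large this crude counting already closes'' is simply false. You also misidentify the automorphism group: inversion $\rho\mapsto\rho^{-1}$ is a complete lattice automorphism of $\Quo X$ that is not induced by any permutation once $|X|\geq 3$ (a permutation fixing all equivalences must be the identity, while inversion fixes them all), so $\mathrm{Aut}(\Quo X)$ properly contains the copy of $\mathrm{Sym}(X)$. Your fallback for the small cases, deriving ``a lattice identity that $\Quo X$ violates'' from $3$-generatedness, also cannot work: the free lattice on three generators is free, so $3$-generatedness imposes no nontrivial lattice identity whatsoever. The genuine non-$3$-generation proof in \cite{czg2017fourgen} is a bespoke structural/combinatorial argument, not a symmetry count.

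Your sketch of the positive $4$-generation is in the right genre (Z\'adori-style addressing quasiorders, recovery of the atomic $\rho_{a,b}$, then infinitary joins), but you misread the role of accessibility. Under the axiom of choice every set is well-orderable, so accessibility does not ``supply'' the well-ordering. What it supplies, following Chajda--Cz\'edli \cite{chajdaczg} and Tak\'ach \cite{takach}, is the ability to climb the cardinal tower $\kappa_0=\aleph_0$, $\kappa_{n+1}=2^{\kappa_n}$ and its suprema by transfinite induction; the induction step cannot leap past an inaccessible cardinal, which is exactly what the hypothesis rules out (and, per the paper's remark, this is why the lemma and hence Theorem~\ref{thmmain} carry the accessibility restriction). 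The $|X|=4$ part is, as you say, a finite verification inside the $355$-element lattice $\Quo{X}$, and its $4$-generation status is indeed open; the paper's footnote~\ref{fn:wstQTrr} notes that a positive answer would let one set $f(4)=4$.
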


For $|X|=4$, we do not know  whether $\Quo X$ is $4$-generated or not.  Our understanding becomes less comprehensive when we stipulate that two out of the four generators are comparable. What we know from Cz\'edli and Kulin \cite[Theorem 3.5]{czgkulin} and Ahmed and Cz\'edli \cite[Remark 3.4]{delbrinczg} is that whenever $X$ is a set such that $|X|$ is an accessible cardinal but $|X|\notin\set{0,1,2,4, 5, 7, 8, 9, 10, 12}$, then $\Quo X$ has a $4$-element generating set that is not an antichain.

For equivalence lattices, there are  results similar to Lemma \ref{lemma:czkLn} and the paragraph following it; see Cz\'edli \cite{CzGEq4}, \cite{CzGEateq}, \cite{czgeek}, \cite{czgdaugthent}, 
Cz\'edli and Oluoch \cite{czgoluoch},
Dolgos \cite{dolgos}, 
Strietz \cite{strietz75}--\cite{strietz77}, and 
Z\'adori \cite{zadori}. Most of the results and papers mentioned so far are based on Z\'adori's excellent method given in \cite{zadori}, in which one of the theorems asserts that the lattice $\Equ X$ of equivalence relations of $X$ is $4$-generated for every finite set $X$. 
Note that although equivalence lattices are less complicated structures than quasiorder lattices,  it was quasiorder lattices that showed the way how to extend results  from finite sets to some infinite sets; see  Chajda and Cz\'edli  \cite{chajdaczg}.

Recently, Cz\'edli \cite{czgdaugthent} and, mainly, \cite{czgBoolegen} have suggested (but not elaborated)  cryptographic protocols based on large lattices generated by few elements. This idea is also among our motivations, since a straightforward induction shows that for $n\in\Nplu =\set{1,2,3,\dots}$, the quasiorder lattice $\Quo{\set{1,\dots,n}}$ has at least $4^{n-1}$ elements, so it is large.

\section{Concepts, notations, and the main result}

First, we recall or introduce the concepts and notations that are necessary to formulate  the main result. As is usual in lattice theory, a lattice $(L;\vee,\wedge)$ or $(L;\leq)$ is denoted simply by its underlying set, $L$. 
However, as a poset and its underlying set will often play different roles, we make a notational difference between them as follows. A poset $(X;\leq)$ or $(X;\mu)$ will be denoted by the corresponding  blackboard bold letter $\XX$, and the underlying set of a poset $\mathbb Y$ is the corresponding italic letter $Y$.
Sometimes, we write $u\in \XX$ or $Y\subseteq \XX$ instead of  $u\in X$ or $Y\subseteq X$, respectively.

For an element $u$ of a lattice $L$, we denote the \emph{principal filter} $\set{x\in L: u\leq x}$ by $\filter u$ or, if confusion threatens, by $\pfilter L u$. Similarly, $\ideal u = \pideal L u$ stands for the  \emph{the principal ideal}  $\set{x\in L: x\leq u}$.

Posets with more than one element are said to be \emph{nontrivial} while the singleton poset is \emph{trivial}. The \emph{length} %
of a finite $n$-element chain is $n-1$. For $n\in\Nnul$,  a \emph{poset is of length} $n$ if it has a chain of length $n$ but it has no chain of length $n+1$.   If $\PP =(P;\leq)$ is a poset and it is of length $n$ for some $n\in\Nnul$, then we say that $\PP $ is \emph{of finite length}. 

We always assume that our poset $\PP $ is of finite length; in this case, the covering relation $\prec$ of $\PP $ determines the (partial) order $\leq$ of $\PP $. While the Hasse diagram of $\PP $ is a directed graph (every edge is directed upwards but this is not indicated), this diagram can also be considered an undirected graph,  the \emph{graph of the poset} $\PP $. The edges of this graph are the two-element subsets $\set{x,y}$ of $\PP $ such that $x\prec y$ or $y\prec x$. We say that the poset $\PP $ is a \emph{forest} if its graph contains no circle of positive length. In this case, the \emph{$($connectivity$)$ components} of $\PP $ (that is, the blocks of the least equivalence of $\PP $ that collapses every edge of $\PP$) are called \emph{trees}. 
For a poset $\XX$, we denote the \emph{set of maximal elements} as $\Max \XX$ and the  \emph{set of minimal elements} as $\Min \XX$.

Next, we define the functions and the parameters that occur in the main result, Theorem \ref{thmmain}; most of them will be denoted by self-explanatory 
mnemonics\footnote{\label{foot:cSrmNk}We have selected mnemonics that can be conveniently located in the  PDF file of the paper using the  search feature of Acrobat Reader or some browsers. For example, the search for ``ce(''  or ``f4''finds the first occurrence of $\ncedge$ or $f_4$, respectively}. 
The parameters are illustrated by Figures \ref{fig1} and \ref{fig2} together with the corresponding Examples \ref{exmpl:BlTrsst} and \ref{exmpl:jtKgmbRnk}, respectively; note that some ingredients of these multi-purpose figures will be defined later.
The main result has some  easy-to-read consequences with less functions and parameters; see Corollary \ref{cor:zsbPl} and Examples \ref{exmpl:BlTrsst}, \ref{exmpl:jtKgmbRnk}, and  \ref{ex:mPkrDsz}. 

%

\begin{figure}[ht] 
\centerline{ \includegraphics[width=\textwidth]{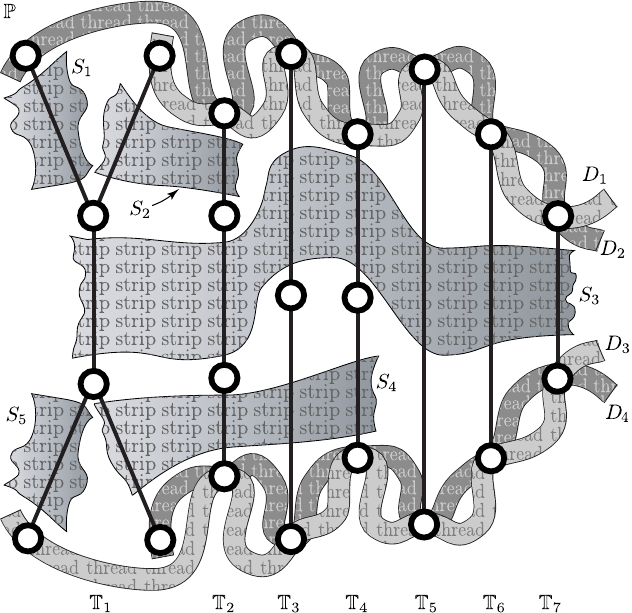}} \caption{A forest $\PP $; see Example \ref{exmpl:BlTrsst}. 
}\label{fig1}
\end{figure}

\begin{definition}\label{def:fntnS} 
For a positive integer $n$, let $\LASp n$ 
denote\footnote{The acronym LASp comes from the \ul Left \ul Adjoint of \ul{Sp}erner's function, which is a functor from the categorified poset $(\set{0,1,2,\dots};\leq)$ to itself; see  Cz\'edli \cite{czgBoolegen} for details.} 
the smallest  $k\in\Nplu:=\set{1,2,3,\dots}$ such that 
\begin{equation}
n\leq { k \choose {\lfloor k/2 \rfloor }}
\label{eq:prSnvlTsrvkZkmln}
\end{equation}
where $\lfloor k/2 \rfloor$ denotes the (lower) integer part of $k/2$, and let $\LASp 0:=0$. 
For some values of $n$, \ $\LASp n$ is given in the following tables.

\begin{tabular}{|l|l|}
\hline
$n$ &$\LASp n$\\\hline
$0,\dots,3$ & $n$\\\hline
$4,\dots,6$ & 4\\\hline
$7,\dots,10$ & 5\\\hline
$11,\dots,20$ & 6 \\\hline
$21,\dots,35$ & 7\\\hline
$36,\dots,70$ & 8\\\hline
$71,\dots,126$ & 9\\\hline
$127,\dots,252$ & 10\\\hline
$253,\dots,462$ & 11\\\hline
$92\,379,\dots,184\,756$ & 20\\\hline
\end{tabular}
\kern 2cm
\begin{tabular}{|l|l|}
\hline$n$\kern 0.8cm&$\LASp n$\\
\hline
$10^{10}$& 37\\\hline
$10^{20}$& 70\\\hline
$10^{30}$ & 104\\\hline
$10^{40}$ & 137 \\\hline
$10^{50}$ & 171\\\hline
$10^{60}$ & 204\\\hline
$10^{70}$ & 237\\\hline
$10^{80}$ & 271\\\hline
$10^{90}$ & 304\\\hline
$10^{100}$ & 337\\\hline
\end{tabular}  
\end{definition}

%

\begin{definition}\label{def:fngfnCt} 
We define a function $\ffunc\colon\set{\text{accessible cardinals}}\setminus\set 0\to\Nnul$ by the rules $\ffunc(1):=0$, $\ffunc(2):=2$, $\ffunc(3):=4$,
$\ffunc(4):=5$ and, for any accessible cardinal $x\geq 5$, $\ffunc(x):=4$. (The subscript is only a reminder to the fact that $\ffunc(x)=4$ for most cardinal numbers $x$.)
\end{definition}


\begin{definition} \label{def:bzSncsGnd} 
Let $\PP =(P;\leq)=(P;\mu)$ 
be a poset  of finite length.

(A) Let $\Comp \PP $ stand for the set of  (connectivity) \emph{components} of $\PP $. 
If the equality $|\Comp\PP|=1$ holds, then $\PP$ is a \emph{connected poset}.
We say that $\PP $ is of \emph{finite component size} if there is a $k\in\Nplu:=\Nnul\setminus\set{0}$
such that for every $\XX\in \Comp \PP $, $\XX$ has at most $k$ elements (that is, $|X|\leq k$). 
In this case, the least such $k$ is called the \emph{component size} of $\PP $; we denote it by $\ncs$.

(B) Let $\ncmp=|\Comp \PP |$; it is the cardinal number of the components of $\PP $. 

(C) Let $\Edge \PP :=\set{(x,y)\in P^2: x\prec y}$, where $\prec$ is the covering relation of $\PP$,  stand for the \emph{set of edges} of $\PP $.

(D) Assuming that $\PP $ is of a finite component size, $\ncs^2$ is a finite upper bound on $\set{|\Edge {\XX}|: \XX\in \Comp \PP }$ and so we can define the 
\emph{component edge  number} of $\PP $ as $\ncedge:=\max\set{|\Edge {\XX}|: \XX\in \Comp \PP }$.  (Note that even though $\ncedge\in\Nnul$,  $\PP $ can have  infinitely many edges.)

(E) For a poset $\bbA=(A;\mu_A)=(A;\leq)$ and $x,y\in A$,  let
\[\qum(y,x) \in\Quleq(\bbA)
\] 
stand for the least member of $\Quleq(\bbA)$ containing $(y,x)$. For a finite connected poset $\XX=(X;\mu_X)=(X;\leq)$, we define the \emph{special parameter} $\spp(\XX)$ as follows. If $X$ is a singleton, then $\spp(\XX):=0$. If $\XX$ is a chain with more than one element, then $\spp(\XX):=1$. 
If $\XX$ is not a chain, then  $\spp(\XX)$ is the smallest number $k\in\Nplu$ such that there is a $(k-1)$-element subset $
Y(\XX)$ of  $\Quleq(\XX)$ with the property that 
\begin{equation}Y(\XX)\cup\set{\qum(y,x): x\prec y\text{ in }\XX}\text{ generates }\Quleq(\XX).
\label{eq:ZlsJdlFRtXgmnRt}
\end{equation}
We do not claim that $Y(\XX)$ above is unique but, for each $\XX$, we always work with a fixed one satisfying \eqref{eq:ZlsJdlFRtXgmnRt}.

(F) Assume that $\ncmp=|\Comp\PP|$ is at least  $2$, and let   $\bSelc_1$ and $\bSelc_2$ be two distinct components of $\PP$. Let $\PPdnul$ 
denote  the poset that we obtain from $\PP$ by omitting $\bSelc_1$ and $\bSelc_2$. That is, $\PPdnul$ is the subposet of $\PP$ determined by $\Pdnul=P\setminus(\Selc_1\cup\Selc_2)$. If $\ncmp=2$ then, exceptionally, $\PPdnul$ is the ``empty poset''. We define the \emph{thread number} $\thn$ as follows.
\begin{align}
\text{If }\PP&\text{ is an antichain, then  }\thn:=1.
\label{eq:mrNgsVzTsTrpmn}
\\
\text{If }\PP&\text{ is not an antichain and }\ncmp =2\text{, then}\cr
&\thn:=\prod_{i=1}^2|\Max{\bSelc_i}| + \prod_{i=1}^2|\Min{\bSelc_i}|.
\label{eq:kNgbRkssRjb}
\\
\text{If }\PP  &\text{ is not an antichain and }\ncmp \geq 3\text{, then }
\cr
&\thn:=\cr
&\max\set{|\Max {\XX}|: {\XX}\in \Comp \PPdnul }\cdot\prod_{i=1}^2|\Max{\bSelc_i}| +\phantom{,}
\cr
&\max\set{|\Min {\XX}|: {\XX}\in \Comp \PPdnul} \cdot\prod_{i=1}^2|\Min{\bSelc_i}|.
\label{eq:mMkflgcNbFpB}
\end{align}
Based on \eqref{eq:mrNgsVzTsTrpmn}, \eqref{eq:kNgbRkssRjb}, \eqref{eq:mMkflgcNbFpB}, and Part (E) of (this) Definition \ref{def:bzSncsGnd},  we define  the \emph{combined parameter} $\cop$ of $\PP$ by the following equation.
\begin{equation}
\begin{aligned}
\cop:=\min\bigl\{ &\ffunc(\ncmp)\cdot \thn + \spp(\bSelc_1)
\cr
&+ \spp(\bSelc_2): \bSelc_1,\bSelc_2\in\Comp\PP\text{ and }\bSelc_1\neq\bSelc_2 \bigr\}.
\end{aligned}
\label{eq:kFlTlkNttmNdNk}
\end{equation}

If $\Comp\PP$ contains two or more chains, then we attain the minimum in  \eqref{eq:kFlTlkNttmNdNk} by
selecting $\bSelc_1$ and $\bSelc_2$ as the smallest chain components.
\end{definition}

\begin{figure}[ht] \centerline{ \includegraphics[width=\textwidth]{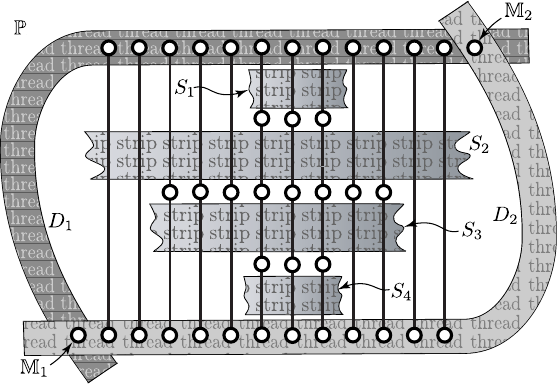}} \caption{A forest $\PP $; see Example \ref{exmpl:jtKgmbRnk}. 
%
 }\label{fig2}
\end{figure}

Based on Definitions \ref{def:fntnS}, \ref{def:fngfnCt}, and \ref{def:bzSncsGnd}(A)--(F), we are in the position to formulate our result on the lattice  $\Pquleq$ of all of quasiorders of $\PP $ that extend $\leq$, which is also denoted by $\mu$.

\begin{theorem}\label{thmmain} 
Let $\PP=(P;\leq)=(P;\mu)$ be a poset of finite component size $\ncs$ such that $|P|$ is an accessible cardinal.
  Then the following three assertions hold.

\textup{(A)} If $\ncmp\geq 2$, 
then $ \Pquleq$ has a complete generating set  $E$ such that 
\begin{equation}
|E|\leq\cop  + \ncedge .
\label{eq:nSmbjvnmrha}
\end{equation}

\textup{(B)} If $\PP $ is a forest and $\ncmp\geq 2$, then $ \Pquleq$ has a complete generating set  $E$ such that 
\begin{equation}
|E|\leq \cop  + \LASp\ncedge .
\label{eq:bnSmgbNDmrhb}
\end{equation}

\textup{(C)} If $\,\PP $ is a finite chain, then
 $\Pquleq$ is $\LASp\ncedge$-generated  but it is not $\left(\LASp\ncedge-1\right)$-generated.
\end{theorem}

As $\PP$ is of finite component size, each of  \eqref{eq:nSmbjvnmrha},  \eqref{eq:bnSmgbNDmrhb}, and $\LASp\ncedge$ in the theorem is an integer. 
For the sake of completeness, we include the following remark; it is less significant than Theorem \ref{thmmain} but provides additional information.

\begin{remark}\label{rem:lsCntFl}
If $\PP$ is a finite connected poset (and so $\ncmp=1$), then $\Pquleq$ is $(\ncedge+\spp(\PP)-1)$ generated, and it is even  $(\LASp\ncedge+\spp(\PP)-1)$ generated if $\PP$ is a finite tree.   
\end{remark}

We are going to prove this remark at the end of Section \ref{sect:proofs}.
For our figures, Theorem \ref{thmmain} asserts  the following.

\begin{example}\label{exmpl:BlTrsst} For $\PP$ drawn in Figure \ref{fig1},  $\Pquleq$  is $22$-generated. Indeed, we have that $\Comp \PP =\set{\TT_1,\dots,\TT_7}$, $\ncmp=7$, $\ncs=6$, and  $\ncedge=5$. As the figure shows, we can choose   
$\Strip :=\set{S_{1},\dots,S_{5}}$, and $\Thread:=\set{D_1,\dots,D_4}$. According to the sentence right after \eqref{eq:kFlTlkNttmNdNk}, we can select $\bSelc_1:=\TT_6$, and  $\bSelc_2:=\TT_7$. 
Then $\thn=2 \cdot 1\cdot 1+2 \cdot 1\cdot 1=4$, 
$\ffunc(\ncmp)=4$, $\spp(\bSelc_1)=1$, $\spp(\bSelc_2)=1$,  
$\cop=4\cdot 4+1+1=18$, $\LASp{\ncedge}=4$, and \eqref{eq:bnSmgbNDmrhb} gives $22$.
\end{example}

\begin{example}\label{exmpl:jtKgmbRnk}
For $\PP$ drawn in Figure \ref{fig2}, 
 $\Quleq(\PP)$ is $12$-generated. The reason is that  $\thn=1+1=2$, $\ffunc(\ncmp)=4$, $\spp(\bSelc_1)=0=\spp(\bSelc_2)$, $\cop=8$, $\ncedge=4=\LASp{\ncedge}$, and  \eqref{eq:bnSmgbNDmrhb} applies.

%
\end{example}

\begin{remark}To explain the connection of the main result and the title of the paper,  
it is reasonable to say that a filter is a \emph{large filter} if it is a principal filter $\filter u$ such that the height of $u$ (that is, the length of $\ideal u$) is small. 
For a set $A$ and $\mu\in\Quo A$,  let $\Theta(\mu)$ denote $\mu\cap\mu^{-1}=\set{(x,y)\in A^2: (x,y)\in\mu\text{ and }(y,x)\in\mu}$. Then $\Theta(\mu)\in\Equ A$, allowing us to consider the \emph{quotient set} $A/\Theta(\mu)$ and \emph{the quotient quasiorder} $\mu/\Theta(\mu)=\set{\bigl(a/\Theta(\mu), b/\Theta(\mu)\bigr): (a,b)\in\mu}$. Clearly, $\mu/\Theta(\mu)$ is a partial order of $A/\Theta(\mu)$. 
There are two paths to establish that the filters  $\pfilter{\Quo A}\mu$ and  $\pfilter{\Quo {A/\Theta(\mu)}} {\mu/\Theta(\mu)}$ are isomorphic lattices. First, this statement is a particular case of (the last sentence of) Cz\'edli and Lenkehegyi \cite[Theorem 1.6]{czglenkh}. Second, a straightforward argument yields the required isomorphism.
Therefore, a large filter $\pfilter{\Quo A}\mu$ of $\Quo A$ is, up to isomorphism, of the form $\Quleq(\PP)$  where $\PP :=\bigl( A/\Theta(\mu); \mu/\Theta(\mu) \bigr)$.
\end{remark}

\section{Proofs}\label{sect:proofs}

\begin{proof}[Proof of Theorem \ref{thmmain}]
While the meet operation in $\Quo \PP $ and  $\Quleq (\PP)$  is the set theoretic intersection, the join in these complete lattices are described with the help of the reflexive and the transitive closure $\rtr$ as follows:
\begin{equation}
\begin{aligned}
\bigvee\set{\rho_j: j\in J}&=\rtr\kern-2pt\left(\bigcup\set{\rho_j: j\in J}\right)\text{ and }\cr
\mbigvee\set{\tau_j: j\in J}&=\rtr\kern-2pt\left(\bigcup\set{\tau_j: j\in J}\right);
\end{aligned}
\label{eq:hNhsntKVnD}
\end{equation}
here and later, $\rtr$ is understood over $P$. 
Since  $\Pquleq$, being a principal filter, is a complete sublattice of $\Quo P$, we will use $\vee$ and $\bigvee$ for the elements of $\Pquleq$. 
For a pair $(x,y)\in P^2$ and a set $\gamma\subseteq P^2$ of such pairs,  let $\quo(x,y)$ and $\quo(\gamma)$ stand for the least quasiorder in $\Quo P$ containing $(x,y)$ and including $\gamma$, respectively. 
Similarly, $\qum(x,y)$ and $\qum(\gamma)$ or $\gamma^+$ are the least members of $\Pquleq$ containing $(x,y)$ and including $\gamma$, respectively. Note that 
\begin{equation}\left.
\parbox{7cm}{
$\gamma^+=\qum(\gamma)=\mu\vee \quo(\gamma)$ and, in particular, $\qum(x,y)=\qum(\set{(x,y)}) =  \mu\vee\quo(x,y)$.}\,\,\right\}
\label{eq:kZdkmGhRnJsz}
\end{equation}
For a subset $X$ of $P$, we define
\[
\RQuleq X:=\set{\rtr(\mu\cup\rho): \rho\in\Quo X}
=\set{\rho^+: \rho\in\Quo X}; 
\]
it is a subposet of $\Pquleq$. 
For a subset $X$ of $P$,  $\nab X$ denotes the ``full relation'' $X^2$ of $X$; then \eqref{eq:kZdkmGhRnJsz} defines $\enab X  \in\Pquleq$.  Observe that 
\begin{equation}
\text{if $\XX\in\Comp \PP $, then 
$\enab X=\mu\cup \nab X$.}
\label{eq:smZnzTsnsdkRc}
\end{equation}

For later use, we formulate the following ``independence principle".
\begin{equation}\left.
\parbox{7.7cm}{If $\XX\in\Comp \PP $,  $p,q\in X$, and $x,y\in P\setminus X$, then $(x,y)\in \qum(p,q)$ if and only if $x\leq y$. An analogous statement holds for $\rho\subseteq X^2$, $x,y\in P\setminus X$, and $(x,y)\in\qum(\rho)$.}\,\,\right\}
\label{eq:gTlsBmpsxsGsTp}
\end{equation}
Since $(p,q)\in\nab T\subseteq \enab T\in\Pquleq$, \eqref{eq:smZnzTsnsdkRc} implies the validity of \eqref{eq:gTlsBmpsxsGsTp}. 

Next, let $\TT\in\Comp\PP$. For $\rho\in \Quleq(\TT)$, $\rho\subseteq T^2\subseteq P^2$, whereby $\qum(\rho)$ makes sense. Hence, we can define a map $\psi_\TT\colon \Quleq(\TT)\to \Quleq(\PP)$ by $\rho\mapsto \qum(\rho)$.   Using \eqref{eq:hNhsntKVnD} or \eqref{eq:gTlsBmpsxsGsTp},  it follows easily that 
\begin{equation}
\psi_\TT\text{ defined above is a complete lattice embedding.}
\label{eq:zJfnNcBrZk}
\end{equation} 
To increase  readability, we partition the rest of the proof into four parts.

\begin{nextpart}
We claim that
\begin{equation}\left. 
\parbox{9.0cm}{if $\emptyset\neq X\subseteq P$
such that $X$ has an at most one-element intersection with each component of  $\PP$, then $\phi_X\colon \Quo X\to \Pquleq$ 
defined by $\rho\mapsto \rtr(\mu\cup\rho)$
is a complete lattice embedding and $\RQuleq X=\phi_X(\Quo X)$; in particular, $\RQuleq X$ is a complete sublattice of $\Pquleq$ and this sublattice is isomorphic to $\Quo X$.}\,\,\right\}
\label{eq:jnmRbgfLFgn}
\end{equation}
For a set $X$, $\Delta_X:=\set{(x,x):x\in X}$ denotes the smallest element of $\Quo X$. To prove \eqref{eq:jnmRbgfLFgn}, consider the functions
\[ 
\begin{aligned} 
\phi'_X&\colon \Quo X \to \Quo P \text{ defined by }\phi'_X(\rho):=\rho\cup \Delta_P\text{ and}
\cr
\phi^\flat_X&\colon \phi'_X(\Quo X)\to \Pquleq \text{ defined by }\phi^\flat_X(\rho):=\rho\vee \mu. 
\end{aligned}
\]
It follows from \eqref{eq:hNhsntKVnD} and $\Delta_P\cup\mu=\mu$ that $\phi_X=\phi_X^\flat \circ\phi'_X$.
Thus, as $\phi'_X$ is clearly an embedding,  
it suffices to show that $\phi^\flat_X$ is also one. 
As $\rtr(\mu\cup\rho)=\mu\vee\rho$ by \eqref{eq:hNhsntKVnD}, it is clear that $\phi^\flat_X$ commutes with joins. 
For $\rho\in\Quo P$, let $\str\rho$ denote the \emph{irreflexive version} $\rho\setminus\Delta_P$ of $\rho$. 
For $\rho \in \phi'_X(\Quo X)$, we have that $\str\rho\subseteq X^2$. Assume that $\rho_j\in \phi'_X(\Quo X)$ for all $j$ in an index set $J$ and $(x,y)\in\bigwedge_{j\in J}\phi^\flat_X(\rho_j)=\bigcap_{j\in J}\phi^\flat_X(\rho_j)$; we need to show that $(x,y)\in
\phi^\flat_X \bigl(\bigcap_{j\in J} \rho_j\bigr)$. Since this is trivial if $(x,y)\in\mu$, we assume that $(x,y)\notin\mu$, that is, $x\nleq y$. 
As $(x,y)\in\bigcap_{j\in J}\phi^\flat_X(\rho_j)$, we have that $(x,y)\in \phi^\flat_X(\rho_j)=\rho_j\vee \mu$ for all $j\in J$. 
By the first line of \eqref{eq:jnmRbgfLFgn}, the transitivity of $\rho_j$, that of $\mu$,  $\rho_j^-\subseteq X^2$,  $x\nleq y$, and \eqref{eq:hNhsntKVnD}, there are elements $z_0=x$, $z_1$, $z_2$, and $z_3=y$ such that $\set{z_1,z_2}\subseteq X$,  $z_1\neq z_2$, 
$(z_0,z_1)\in \mu$, $(z_1,z_2)\in\str\rho_j$, and $(z_2,z_3)\in\mu$. The first two memberships give that $z_1$ is the only element in the intersection of $X$ and the component of $z_0=x$. Similarly, the last two memberships give that $z_2$ is the only element in the intersection of $X$ and the component of $z_3=y$. Thus, $z_1$ and $z_2$ do not depend on $j$, whereby $(z_1,z_2)\in \bigcap_{j\in J} \rho_j$ and we obtain that $(x,y)\in \Bigl(\bigcap_{j\in J} \rho_j\Bigr)\vee \mu= \phi^\flat_X\Bigl(\bigcap_{j\in J} \rho_j\Bigr)$. 
So $\bigcap_{j\in J}\phi^\flat_X(\rho_j)
\subseteq
\phi^\flat_X\Bigl(\bigcap_{j\in J} \rho_j\Bigr)$. This yields that $\phi^\flat_X$ commutes with meets as it is isotone. So $\phi^\flat_X$ is a complete lattice homomorphism.
For $\rho\in \Quo P$, let $\restrict \rho X$ denote the restriction of $\rho$ to $X$. That is, $\restrict \rho X=\rho\cap(X\times X)$. 
The first line of \eqref{eq:jnmRbgfLFgn}
together with \eqref{eq:hNhsntKVnD} imply easily that for $\rho\in \phi_X'(\Quo X)$, we have that 
$\rho=\restrict{\phi^\flat_X(\rho)}X\cup  \Delta_P$. This yields the injectivity of $\phi^\flat_X$ and 
we have shown \eqref{eq:jnmRbgfLFgn}.
\end{nextpart}

For a poset $\XX$, we define $\iEdge \XX:=\set{(x,y): (y,x)\in\Edge \XX}=\set{(x,y):  x,y\in X\text{ and }y\prec x}$. With the help of this notation,  we introduce two crucial concepts for the proof as follows.
By a \emph{strip} of $\PP$, we mean a nonempty subset $S$ of $\iEdge \PP$ such that for all $\XX\in\Comp \PP$, we have that $|S\cap \iEdge  \XX|\leq 1$.
An \emph{upper thread} is a subset $D$ of $\Max \PP$ such that for all $\XX \in\Comp \PP$, we have that $|D\cap \Max \XX|=1$.
Similarly, if $D\subseteq \Min\PP$ such that $|D\cap \Min \XX|=1$ for all $\XX \in\Comp \PP$, then $D$ is a \emph{lower thread}. Upper threads and lower threads are called \emph{threads}. Note the difference: ``$\leq 1$'' for strips but ``$=1$'' for threads. 
Some strips and threads are visualized in Figures \ref{fig1} and \ref{fig2}.

\begin{equation}\left.
\parbox{9.5cm}{The stipulations on $\PP$ allow us to choose a partition $\Strip$ of $\iEdge \PP$ such that $\Strip$ consists of 
$\ncedge$  blocks and each of these blocks is a strip of  $\PP$.
}\,\,\right\}
\label{eq:strpS}
\end{equation}

\begin{nextpart}
This part of the proof is needed  only for Parts (B) and (C) of the theorem. 
We claim that
\begin{equation}\left.
\parbox{7cm}{if $\PP$ is a forest of finite component size, then $\forall (v,u)\in \iEdge \PP$, $\qum(v,u)\nleq \bigvee\set{\qum(y,x)  : (y,x)\in\iEdge \PP\setminus\set{(v,u)}}$.}\right\}
\label{eq:smTsmvllKnrVtd}
\end{equation}
To verify this, suppose the contrary. Then, combining \eqref{eq:hNhsntKVnD} and \eqref{eq:kZdkmGhRnJsz} with $\mu=\rtr(\Edge \PP)$, we can find a \emph{shortest} sequence $z_0=v$, $z_1$,\dots, $z_{n-1}$, $z_n=u$ of elements of $P$ such that for each $s\in\set{1,\dots,n}$,  either $z_{s-1}\prec z_s$,  or $z_{s-1}\succ z_{s}$ but $(z_{s-1},z_{s})\neq (v,u)$. Our sequence, being the a shortest one, is repetition-free. Hence, in particular, none of the $z_1$,\dots, $z_{n-1}$ is $u$ or $v$.
But then $z_0=v$, $z_1$,\dots, $z_{n-1}$, $z_n=u$, $z_{n+1}=z_0$ is a non-singleton circle in the graph of $\PP$, which is a contradiction proving \eqref{eq:smTsmvllKnrVtd}.

For a strip $S\in\Strip$, see \eqref{eq:strpS}, $S$ is a set of reversed edges and so a subset of $P^2$, whence $\qum(S)$ makes sense. For $\TT\in \Comp \PP$,  $|\iEdge \TT\cap S|\leq 1$. Thus, using a shortest sequence like in the previous paragraph and observing that this sequence cannot ``jump'' from a component to another one, we obtain easily that
\begin{equation}\left.
\parbox{7cm}{if $(y,x)\in S\in\Strip$ and $x,y\in \TT\in\Comp \PP$, then $\qum(y,x)=\qum(S)\cap \enab T$.}\,\,\right\}
\label{eq:trZlmSgLxprfTr}
\end{equation}

We claim  that 
\begin{equation}\left.
\parbox{9.5cm}{if $\PP$ is a forest of finite component size, then $\Pquleq$ contains an at most  $\LASp\ncedge$-element subset $G$ such that for each $S\in \Strip$, $\qum(S)$ is the meet of some members of $G$.}\,\,\right\}
\label{eq:hkdPstrmLft}
\end{equation}

For a set $H$, we denote the \emph{power set lattice} of $H$ by $\Pow H$; in this lattice, $\vee$, $\wedge$ and $\leq$ are $\cup$, $\cap$, and $\subseteq$, respectively. 
Since $|\Strip|=\ncedge$, we know from Cz\'edli \cite{czgBoolegen} that $\Pow\Strip$ has a $\LASp\ncedge$-element generating set $G_0$. The members of $G_0$ are subsets of $\Strip$. For each strip $S$, the singleton $\set{S}\in\Pow\Strip$ belongs to the sublattice $[G_0]$ generated by $G_0$. Since $\Pow\Strip$ is distributive, $\set S$ is obtained by applying a disjunctive normal form to appropriate elements of $G_0$. That is, $\set{S}$ is the union of intersections of elements of $\Pow \Strip$. However, $\set{S}$ is join-irreducible (i.e., union-irreducible), whereby no union is needed and $\set S$ is the intersection of some members of $G_0$. So 
\begin{equation}
\set S=\bigcap \set{X: S\in X\in G_0}\,\text{ for every }S\in \Strip.
\label{eq:rgtNmdkNb}
\end{equation}
For $X\in G_0$, we let 
\begin{align}
&\qum(X):=\bigvee\set{\qum(S): S\in X}=\qum\bigl(\bigcup\set{S: S\in X}\bigr)
\label{eq:blnFrvhZja},\\
&\text{and we define }G:=\set{\qum(X): X\in G_0}.
\label{eq:blnFrvhZjb}
\end{align}
We are going to show that $G$ satisfies the requirements of \eqref{eq:hkdPstrmLft}. As $|G_0|=\LASp\ncedge$, we have that $|G|\leq  \LASp\ncedge$, as required.

Next, we are going to show that for  $S$ occurring in \eqref{eq:rgtNmdkNb},
we have that 
\begin{equation}
\qum(S)=\bigwedge\set{\qum(X):X\in G_0\text{ and }S\in X}.
\label{eq:mchGfhlStwmnc}
\end{equation}
The ``$\subseteq$'' part of \eqref{eq:mchGfhlStwmnc} is clear since $S\in X$ implies, by \eqref{eq:blnFrvhZja}, that $\qum(S)\leq \qum(X)$. 
To verify the converse inclusion, assume that $p,q\in P$ such that 
\begin{equation}p\neq q\,\,\text{ and }\,\,(p,q)\in \qum(X)\text{ for all }X\in G_0\text{ containing }S.
\label{eq:mKhcPhfWtcrGKhV}
\end{equation}
For an $X$ such that  $S\in X\in G_0$, as in  \eqref{eq:mKhcPhfWtcrGKhV}, the containment $(p,q)\in \qum(X)$, $\mu=\rtr{(\Edge \PP})$, and  \eqref{eq:hNhsntKVnD}
yield a \emph{shortest} sequence\footnote{For convenience, we write this sequence as a vector.} 
\begin{equation}
\vec r(X)=\big(r_0(X)=p, \, r_1(X),\, \dots,\, r_{m(X)-1}(X),r_{m(X)}(X)=q\bigr)
\label{eq:rshszLndl}
\end{equation}
of elements of $P$ such that for each $s\in\set{1,\dots,m(X)}$,
\begin{itemize}
\item[$(\textup{c}1)$] $r_{s-1}(X)\prec r_s(X)$ or
\item[$(\textup{c}2)$] $(r_{s-1}(X),r_{s}(X))\in \qum(S')$ for some $S'\in X$.
\end{itemize}
By the same reason, the containment in $\qum(S')$ in 
$(\textup{c}2)$ also has an expansion to a sequence of elements of $P$. Thus, assuming that this expansion has already been made
(and resetting the notation), we can replace  $(\textup{c}2)$ by the following condition:
\begin{itemize}
\item[$(\textup{c}2')$]  $(r_{s-1}(X),r_{s}(X))\in  S'\subseteq \iEdge \PP$ for some $S'\in X$.
\label{eq:chnTjkbFMrJlc}
\end{itemize}
So, for each $s\in\set{1,\dots,m(X)}$, either $(\textup{c}1)$  or $(\textup{c}2')$
holds. 
In both cases, $r_{s-1}(X)$ and $r_s(X)$ are the two endpoints of an edge of the (undirected) graph of $\PP$.
Hence, $\vec r(X)$ is a path in the  graph of $\PP$. In a forest, the shortest path connecting two vertices is unique (as otherwise we would get a circle). 
Consequently,  for $X\in G_0$ such that $S \in X$,   
\begin{equation}
\text{the sequence  }\vec r(X)\text{ does not depend on }X.
\label{eq:szWrrGthmnkRn}
\end{equation}
This allows us to write $m$, $\vec r$, and $r_i$ instead of $m(X)$, $\vec r(X)$, and $r_i(X)$, respectively. 
As distinct strips are disjoint, $S'$ in 
$(\textup{c}2')$ is uniquely determined.
Keeping  in mind that $X$ has contained $S$ since \eqref{eq:mKhcPhfWtcrGKhV},  \eqref{eq:szWrrGthmnkRn} and $(\textup{c}2')$ yield that for each $s\in\set{1,\dots,m}$, 
either $(\textup{c}1)$, i.e. $r_{s-1}\prec r_s$,  or
\begin{itemize}
\item[$(\textup{c}3)$]  $(r_{s-1},r_{s})\in  S'\subseteq \iEdge \PP$ for some  $S'$ belonging to all the $X\in G_0$ that contain $S$.
\end{itemize}
It follows from \eqref{eq:rgtNmdkNb} that 
$S'$ in $(\textup{c}3)$ is $S$. 
Hence, if  $(\textup{c}3)$ holds for a subscript $s\in\set{1,\dots,m}$, then $(r_{s-1},r_s)\in S\subseteq \qum(S)$. If $(\textup{c}1)$ holds for this $s$, then $(r_{s-1},r_s)\in \qum(S)$ again since $\mu\subseteq \qum(S)$. Therefore, we obtain that $(p,q)=(r_0,r_m)\in\qum(S)$ by transitivity. 
We have obtained the converse inclusion for \eqref{eq:mchGfhlStwmnc}. Hence, we have proved \eqref{eq:mchGfhlStwmnc} and, thus,  \eqref{eq:hkdPstrmLft}.
\end{nextpart}

\begin{nextpart}
The following ``convexity property'' is valid for any poset $\PP$. 
\begin{equation}\left.
\parbox{9cm}{For $a<b\in \PP$ and $\rho\in\Pquleq$, $(b,a)\in \rho$ if and only if $(y,x)\in\rho$ for all edges $[x,y]$ such that $a\leq x\prec y\leq b$.}\,\,\right\}
\label{eq:nGmtvRnrGDggr}
\end{equation}
The ``if part'' of \eqref{eq:nGmtvRnrGDggr} is clear by transitivity. To see the ``only if part'', assume that $(b,a)\in\rho$ and $[x,y]$ is an edge in the interval $[a,b]$. Then $(y,b)\in\mu\subseteq \rho$, $(b,a)\in\rho$, and $(a,x)\in\mu\subseteq \rho$ imply $(y,x)\in\rho$ by transitivity, proving \eqref{eq:nGmtvRnrGDggr}.

Now we are in the position to prove Part (C) of the theorem. So let $\PP$ be a chain of length $\ncedge$ for a while. 
By Kim, Kwon, and Lee \cite{kimatal}, $\Pquleq$ is a Boolean lattice with $\ncedge$ atoms.
Alternatively, since all the necessary tools occur in the present paper, we can argue shortly as follows. For each member $\rho$ of $\Pquleq$,   \eqref{eq:nGmtvRnrGDggr} gives that 
\[
\rho=\bigvee\{\qum(y,x): (y,x)\in\iEdge \PP\cap \rho\}.
\] 
By \eqref{eq:smTsmvllKnrVtd}, none of the joinands can be removed, and it follows that  $\Pquleq$ is (order) isomorphic to the powerset 
lattice of $\iEdge \PP$, whereby it is
isomorphic to the Boolean lattice with $\ncedge$ atoms. Therefore, Part (C) of Theorem \ref{thmmain} follows from Cz\'edli \cite{czgBoolegen}.
\end{nextpart}

\begin{nextpart}
Now we turn our attention to Parts (A) and (B) of the theorem. 
For an antichain, the theorem asserts the same as Lemma \ref{lemma:czkLn}. Thus, in the rest of the proof, we assume that $\PP$ is not an antichain. 
For Part (B), we choose $G$ according to \eqref{eq:hkdPstrmLft}. For Part (A), let $G:=\Strip$; see \eqref{eq:strpS}. It is trivial for Part (A) and it follows from  \eqref{eq:hkdPstrmLft} for Part (B) that 
\begin{equation}\left.
\parbox{9.2cm}
{$|G|=\ncedge$ for Part (A), $|G|=\LASp\ncedge$ for Part (B), and (for both of these two parts of the theorem) for each $S\in\Strip$, $\qum(S)$ is the meet of some members of $G$.   
}\,\,\right\}
\label{eq:mGmCrsWthW}
\end{equation}

Next, we choose a set $\Thread$ of threads of $\PP$; it is not unique in general. Choose $\bSelc_1,\bSelc_2\in\Comp\PP$ such that
\begin{equation}
\bSelc_1\text{ and }\bSelc_2\text{ witness the minimum in \eqref{eq:kFlTlkNttmNdNk}}. 
\label{eq:szNsTlkZJqkl}
\end{equation}
Although $\bSelc_1$ and $\bSelc_2$ are not uniquely determined in general, we fix an $\bSelc_1$ and an $\bSelc_2$ satisfying \eqref{eq:szNsTlkZJqkl}. If $\ncmp\geq 3$, then choose a set $T^{(\text{up})}$ of upper threads of $\PPdnul$ such that 
\[
|T^{(\text{up})}|=\max\set{|\Max {\XX}|: \XX\in \Comp \PPdnul}
\]
and each $x\in\Max\PPdnul$ belongs to a thread  $D\in T^{(\text{up})}$. 
Dually, if $\ncmp\geq 3$, then we choose a set $T^{(\text{lo})}$ of lower threads of $\PPdnul$ such that 
\[
|T^{(\text{lo})}|=\max\set{|\Min {\XX}|: \XX\in \Comp \PPdnul}
\]
and  $\Min\PPdnul=\bigcup\set{D: D\in T^{(\text{lo})}}$.
If $\ncmp=2$, then we consider $\emptyset$ a thread of the ``empty poset'' $\PPdnul$ and we let  $T^{(\text{up})}:=\set\emptyset$ and $T^{(\text{lo})}:=\set\emptyset$.
By adding an element of $\Max{\bSelc_1}$ and that of $\Max{\bSelc_2}$ to a thread $D\in T^{(\text{up})}$, we can extend $D$ to an upper thread of $\PP$; note that $D$ has $|\Max{\bSelc_1}|\cdot |\Max{\bSelc_2}|$ such extensions. Taking all $D\in T^{(\text{up})}$ into account, these extensions form a set that we denote by $T^{(\text{up})}_{\text{ext}}$. We define $T^{(\text{lo})}_{\text{ext}}$ dually, and we let 
$\Thread:= T^{(\text{up})}_{\text{ext}}\cup T^{(\text{lo})}_{\text{ext}}$. As $\PP$ is not an antichain, $T^{(\text{up})}_{\text{ext}}\cap T^{(\text{lo})}_{\text{ext}}=\emptyset$. 
By \eqref{eq:kNgbRkssRjb} and \eqref{eq:mMkflgcNbFpB},  $|\Thread|=\thn$.

Clearly, for all $\TT\in\Comp\PP$, $i\in\set{1,2}$,
$x_\ast\in\Min\TT$, $y_\ast\in\Min{\bSelc_i}$,
$x^\ast\in\Max\TT$, and $y^\ast\in\Max{\bSelc_i}$, there exist $D',D''\in\Thread$ such that 
\begin{equation}
\set{x_\ast,y_\ast}\subseteq D'\text{ and }\set{x^\ast,y^\ast}\subseteq D''.
\label{eq:dzfSlfmDkKsrbk}
\end{equation}

For $D\in\Thread$, $|D|=\ncmp$ by definitions. 
So Lemma \ref{lemma:czkLn} and Definition \ref{def:fngfnCt}
allow us to pick an $\ffunc(\ncmp)$-element generating set $H_D^{(0)}$ of $\Quo D$. By \eqref{eq:jnmRbgfLFgn},  $\RQuleq D$, which is a complete sublattice of $\Pquleq$, also has an  $f(\ncmp)$-element generating set $H_D:=\phi_D(H_D^{(0)})$. Let
\begin{equation}
H:=\bigcup_{D\in\Thread} H_D\text{; note that }|H|=\thn\cdot \ffunc(\ncmp).
\label{eq:hnJkzMzPdFk}
\end{equation}

Next, let $i\in \set{1,2}$ and keep  \eqref{eq:szNsTlkZJqkl} in mind. If $\spp(\bSelc_i)=0$ (that is, if $|\Selc_i|=1$), then $F^{(0)}(\bSelc_i):=\emptyset$. If  $\bSelc_i$ is a finite chain with more than one element, then let $F^{(0)}(\bSelc_i):= \set{\nab{\Selc_i}}$. 
If $\bSelc_i$ is not a chain, then let $F^{(0)}(\bSelc_i)$ be the union of  $\set{\nab{\Selc_i}}$ and the set $Y(\bSelc_i)$; see \eqref{eq:ZlsJdlFRtXgmnRt}.
In all cases, $F^{(0)}(\bSelc_i)\subseteq  \Quleq(\bSelc_i)$.  Note that 
\begin{equation}
\text{for $i\in\set{1,2}$, \ $|F^{(0)}(\bSelc_i)| = \spp(\bSelc_i)$.}
\label{eq:mchNnzgRBltnFdWr}
\end{equation}
Now we pass from $\bSelc_i$ to $\PP$. For $i\in\set{1,2}$, let $\psi_i$ stand for $\psi_{\bSelc_i}$; see \eqref{eq:zJfnNcBrZk}.
Let $\SSelc_i:=\psi_i\bigl(\Quleq(\bSelc_i)\bigr)$ and 
$F(\SSelc_i):=\psi_i(F^{(0)}(\bSelc_i))$; the superscript of $\SSelc_i$ is just a reminder to ``\pmb{\tbf s}ub\pmb{\tbf l}attice''. 
Note the difference: $\bSelc_i$ is in $\Comp \PP$ and so it consists of some elements of $P$ while  $\SSelc_i$ is a sublattice of $\Pquleq$ and consists of some quasiorders of $P$.
As a subset of $\SSelc_i$, $F(\SSelc_i)$ is also a subset of $\Pquleq$.
It is clear by \eqref{eq:ZlsJdlFRtXgmnRt} and \eqref{eq:mchNnzgRBltnFdWr}  that for $i\in\set{1,2}$
\begin{gather}
F(\SSelc_i)\cup\iEdge{\bSelc_i} \text{ generates }
\SSelc_i, \quad \enab{\Selc_i} \in F(\SSelc_i),
\label{eq:lCrgMszFlgBa}
\\
\text{and \ }|F(\SSelc_i)|= \spp(\bSelc_i).
\label{eq:lxMszFlPblgB}
\end{gather}
Now \eqref{eq:mGmCrsWthW}, \eqref{eq:hnJkzMzPdFk}, and \eqref{eq:lCrgMszFlgBa} allow us to define
\begin{equation}
E:=F(\SSelc_1)\cup F(\SSelc_2)\cup G\cup H .
\label{eq:vgHjvTrVlC}
\end{equation}
It follows from \eqref{eq:mGmCrsWthW}, \eqref{eq:hnJkzMzPdFk}, and \eqref{eq:lxMszFlPblgB} that, depending on whether we deal with Part (A) or Part (B) of the theorem, $|E|$ satisfies \eqref{eq:nSmbjvnmrha} or \eqref{eq:bnSmgbNDmrhb}, respectively. 
Thus, it suffices to show
that $E$ generates $\Pquleq$.
Let $K$ be a complete sublattice of $\Pquleq$  such that $E\subseteq K$. To complete the proof, we need to show that $\Pquleq\subseteq K$. It follows from $H_D\subseteq H\subseteq E$ that
\begin{equation}
\text{for all }D\in\Thread, \quad \RQuleq D\subseteq K.
\end{equation}

Next, we claim that
\begin{equation}
\text{for any $i\in\set{1,2}$ and $(y,x)\in\iEdge{\bSelc_i}$, \ $\qum(y,x)\in K$.}
\label{eq:wDgkNwrtsflWr}
\end{equation}
To see this, take the unique $S\in\Strip$ that contains the reversed edge $(y,x)$ of $\bSelc_i$. Since $G\subseteq E$, \eqref{eq:mGmCrsWthW} yields that 
$\qum(S)$ is in $K$. So is $\enab {\Selc_i}$ by 
$F(\SSelc_i)\subseteq E$ and \eqref{eq:lCrgMszFlgBa}. Hence \eqref{eq:trZlmSgLxprfTr} gives that
$\qum(y,x)=\qum(S)\wedge \enab {\Selc_i}\in K$, proving 
\eqref{eq:wDgkNwrtsflWr}. Now it follows from 
\nothing{the ``independence principle'' \eqref{eq:gTlsBmpsxsGsTp},}
\eqref{eq:wDgkNwrtsflWr}, $F(\SSelc_i)\subseteq E$, and \eqref{eq:lCrgMszFlgBa} that for all $i\in\set{1,2}$,
\begin{equation}
\text{$\SSelc_i\subseteq K$, that is, for any $x,y\in\Selc_i$, \ $\qum(x,y)\in K$.}
\label{eq:knTcldBspThswN}
\end{equation}

Our next task (to be  completed in the two lines after \eqref{eq:kHwkSrdfksznT})
is to show that 
\begin{equation}\left.
\parbox{6.5cm}{for any $x_\ast\in\Min\PP$ and $y^\ast\in\Max\PP$, if $x_\ast\leq y^\ast$, then $\qum(y^\ast,x_\ast)\in K$.}\,\,\right\}
\label{eq:brSprhjMngzsLfnD}
\end{equation}
To show this, let $\TT$ be the unique component of $\PP$ that contains $x_\ast$ and $y^\ast$. We can assume that $x_\ast <  y^\ast$.  By \eqref{eq:knTcldBspThswN}, we can also assume that $\TT\notin\set{\bSelc_1,\bSelc_2}$. For $i\in\set{1,2}$, pick an element $x_{\ast i}\in\Min{\bSelc_i}$ and an element $y^\ast_i\in\Max{\bSelc_i}$ such that $x_{\ast i}\leq y^\ast_i$. 
In virtue of \eqref{eq:dzfSlfmDkKsrbk}, we can select   $D'_i,D''_i\in\Thread$ such that $\set{x_\ast, x_{\ast i}}  \subseteq D'_i$ and $\set{x^\ast, x^\ast_i}  \subseteq D''_i$.
By the choice of $H_D$ (with $D=D'_i$ or $D=D''_i$), \eqref{eq:jnmRbgfLFgn}, and \eqref{eq:hnJkzMzPdFk},
\begin{equation}
\qum(y^\ast,y^\ast_i)\in K\text{ and }\qum(x_{\ast i},x_\ast)\in K\text{ for }i\in\set{1,2}.
\label{eq:rmMkMjdlkMgndj}
\end{equation}
We know from \eqref{eq:knTcldBspThswN} that
\begin{equation}
\text{for }i\in\set{1,2},\text{ }\qum(y^\ast_i,x_{\ast i})\in K.
\label{eq:knPrvClRhbT}
\end{equation}
We claim that 
\begin{equation}
\spech(y^\ast,x_\ast):={} \bigwedge_{i=1}^2
\bigl( \qum(y^\ast,y^\ast_i) \vee \qum(y^\ast_i,x_{\ast i}) \vee \qum(x_{\ast i},x_\ast)
\bigr)  \in K
\label{eq:kbjkkRlsgnvSb}
\end{equation}
and
\begin{equation}
\qum(y^\ast,x_\ast)\leq \spech(y^\ast,x_\ast)   \leq \enab T .
\label{eq:kbjkkRlsgnvSa}
\end{equation}
%
Observe that \eqref{eq:kbjkkRlsgnvSb} is clear by \eqref{eq:rmMkMjdlkMgndj} and \eqref{eq:knPrvClRhbT}. The first inequality in \eqref{eq:kbjkkRlsgnvSa} is trivial by transitivity. To show the second inequality in \eqref{eq:kbjkkRlsgnvSa}, assume that $(u,v)\in P^2$ belongs to the meet in \eqref{eq:kbjkkRlsgnvSb}, that is, it belongs to $\spech(y^\ast,x_\ast)$. We also assume that $u\nleq v$ since otherwise $(u,v)\in \enab T$ is trivial. Let $i\in\set{1,2}$. Using the description of joins, see \eqref{eq:hNhsntKVnD}, \eqref{eq:kZdkmGhRnJsz}, and \eqref{eq:smZnzTsnsdkRc},  we can take a \emph{shortest} sequence $\vec w: w_0=u, w_1, w_2,\dots, w_{t-1}, w_t=v$ of pairwise different elements of $P$ such that for each $j\in\set{1,\dots,t}$, one of the following four alternatives holds: 
\begin{itemize}
\item[$(\textup{d}1)$] $w_{j-1}<w_j$ \ (and so $w_{j-1}$ and $w_j$ belong to the same component of $\PP$);
\item[$(\textup{d}2)$] $(w_{j-1},w_j)=(y^\ast,y^\ast_i)$;
\item[$(\textup{d}3)$]  $(w_{j-1},w_j)=(y^\ast_i,x_{\ast i})$;
\item[$(\textup{d}4)$]  $(w_{j-1},w_j)=(x_{\ast i},x_\ast)$.
\end{itemize}
Since $u\nleq v$, at least one of $(\textup{d}2)$, $(\textup{d}3)$, and $(\textup{d}4)$ holds some $j\in\set{1,\dots,t}$. Hence, at least one of the elements $w_j$ belongs to $T\cup\Selc_i$.  This fact and  the parenthesized comment in $(\textup{d}1)$ yield that  all the  $w_j$ belong to $T\cup\Selc_i$. In particular, $\set{u,v}=\set{w_0,w_t}\subseteq T\cup\Selc_i$. As opposed to the elements $w_j$ for $j\in\set{1,\dots,t-1}$, we know that $\set{u,v}$ does not depend on $i\in\set{1,2}$. Hence, $\set{u,v}\subseteq ( T\cup\Selc_{i})\cap ( T\cup\Selc_{3-i})=T\cup(\Selc_i\cap \Selc_{3-i})=T\cup\emptyset =
T$. Thus, $(u,v)\in\enab T$, proving  the second inequality of \eqref{eq:kbjkkRlsgnvSa} and \eqref{eq:kbjkkRlsgnvSa}  itself.

Based on \eqref{eq:kbjkkRlsgnvSa} and \eqref{eq:kbjkkRlsgnvSb}, now we show that
\begin{equation}
\text{for any }u,v\in P,\text{ if }u<v,\text{ then }\qum(v,u)\in K.
\label{eq:zZjlbRpnGpbpl}
\end{equation}
%
%
First, we deal with the particular case where $(v,u)\in\iEdge \PP$. Let $S\in\Strip$ be the unique strip that contains $(v,u)$.  Since $G\subseteq E$, we know from \eqref{eq:mGmCrsWthW} that $\qum(S)$ is in $K$. Pick $u_\ast\in \Min\PP$ and $v^\ast\in\Max \PP$ such that $u_\ast\leq u\prec v\leq v^\ast$ and let $\TT$ be the unique component of $\PP$ that contains 
these four elements. 
%
Combining \eqref{eq:trZlmSgLxprfTr}, \eqref{eq:nGmtvRnrGDggr}, and  \eqref{eq:kbjkkRlsgnvSa}, we have that
\begin{align*}
\qum(v,u)
&=\qum(S)\wedge\qum(v,u)
\overset{\eqref{eq:nGmtvRnrGDggr}}\leq 
\qum(S)\wedge \qum(v^\ast, u_\ast)
\cr
&
\overset{\eqref{eq:kbjkkRlsgnvSa}}
\leq
\qum(S)\wedge \spech(v^\ast,u_\ast)
\overset{\eqref{eq:kbjkkRlsgnvSa}}\leq \qum(S) \wedge \enab T \overset{\eqref{eq:trZlmSgLxprfTr} }= \qum(v, u).
\end{align*}
These inequalities and  \eqref{eq:kbjkkRlsgnvSb} yield that 
$\qum(v, u)
=
\qum(S)\wedge \spech(v^\ast,u_\ast)\in K$. That is, the particular case $u\prec v$ of \eqref{eq:zZjlbRpnGpbpl} holds.
It follows from transitivity and \eqref{eq:nGmtvRnrGDggr} that
\begin{equation}
\text{for any $a<b\in \PP$, }\,\,\,
\qum(b,a)=\bigvee\set{\qum(y,x): a\leq x\prec y\leq b}.
\label{eq:kHwkSrdfksznT}
\end{equation}
Therefore, the particular case implies the general case, and we have shown \eqref{eq:zZjlbRpnGpbpl}. 
We have also shown \eqref{eq:brSprhjMngzsLfnD} since it is a particular case of \eqref{eq:zZjlbRpnGpbpl}.

Now we are ready to show that 
\begin{equation}
\text{for each   $\TT=(T;\mu_T)\in\Comp \PP$, \quad  $\enab T\in K$.}
\label{eq:brnszRrtn}
\end{equation}
Here, of course, $\mu_T$ stands for the restriction $\restrict \mu T$ of $\mu$ to $T$. We can assume that $|T|>1$. In virtue of \eqref{eq:zZjlbRpnGpbpl}, it suffices to show that $\enab T=\bigvee\set{\qum(y,x): (y,x)\in\iEdge T} $, that is, $\enab T=\qum(\iEdge T)$. To see the ``$\leq$'' part, assume that $(x,y)\in\enab T$. As $\TT$ is a \emph{connected} component, there are elements $z_0=x, z_1,\dots, z_{t-1}, z_t=y$ in $T$ such
$(z_{j-1},z_j)\in\Edge \TT\cup\iEdge \TT$ for every $j\in\set{1,\dots,t}$. 
Regardless whether $(z_{j-1},z_j)$ is in $\Edge \TT$ or it is in $\iEdge \TT$, we have that $(z_{j-1},z_j)\in\qum(z_{j-1},z_j)\subseteq \qum(\iEdge \TT)$. Thus the  ``$\leq$'' 
 part of the required equality follows by transitivity. As the converse inequality is trivial by \eqref{eq:gTlsBmpsxsGsTp}, we conclude \eqref{eq:brnszRrtn}.

The following easy assertion will be useful.
\begin{equation}
\text{For any }
a,b\in P, 
\text{ we have that }
\qum(a,b)=\mu\cup(\ideal a\times\filter b).  
\label{eq:ddTldmwHnc}
\end{equation}
The ``$\geq$'' part is clear by transitivity. To show the converse inequality, assume that $(p,q)\in \qum(a,b)\setminus \mu$. By \eqref{eq:hNhsntKVnD} and \eqref{eq:kZdkmGhRnJsz}, there exists a sequence $x_0=p, x_1,\dots, x_{t-1}, x_t=q$ of elements in $P$ such that, for each $i\in\set{0,\dots, t-1}$,   $(x_i,x_{i+1})\in \mu$ or  $(x_i,x_{i+1})=(a,b)$. As $p\nleq q$,  there is at least one $i$ such that $(x_i,x_{i+1})=(a,b)$. The least such $i$ shows that $p=x_0\leq x_1\dots\leq x_i=a$, whence $p\in\ideal a$, while the largest such $i$ yields that $b=x_{i+1}\leq\dots\leq x_t=q$, whereby $q\in\filter b$. Hence, $(p,q)\in\ideal a\times \filter b \subseteq \mu\cup(\ideal a\times\filter b)$, proving \eqref{eq:ddTldmwHnc}.

Next, we claim that 
\begin{equation}
\parbox{10cm}{for any $\TT_1,\TT_2\in\Comp\PP$, if 
$\TT_1\neq\TT_2$,  $\set{\bSelc_1,\bSelc_2}\cap\set{\TT_1,\TT_2}\neq\emptyset$, $a\in T_1$ and $b\in T_2$, then  $\qum(a,b)\in K$.}
\label{eq:bwmkhPDhgWrtcndSrt}
\end{equation}
To show this, pick $a_\ast\in\Min{\TT_1}$, $a^\ast\in \Max{\TT_1}$, $b_\ast \in\Min{\TT_2}$,  and $ b^\ast\in \Max{\TT_2}$ such that $a_\ast\leq a\leq a^\ast$ and $b_\ast\leq b\leq b^\ast$. 
Let 
\begin{equation*}
\bup:= \qum(a^\ast,b^\ast) \vee
\qum(b^\ast,b)\text{ and }
\gdn:= \qum(a, a_\ast) \vee\qum(a_\ast,b_\ast). 
\end{equation*}
We claim that 
\begin{equation}
\qum(a,b)=\bup\wedge \gdn\in K.  \label{eq:fFrdmNgRrxc}
\end{equation}
Using the assumption $\set{\bSelc_1,\bSelc_2}\cap\set{\TT_1,\TT_2}\neq\emptyset$,  \eqref{eq:dzfSlfmDkKsrbk} allows us to  select $D',D''\in\Thread$ such that $\set{a_\ast,b_\ast}\subseteq D'$ and $\set{a^\ast,b^\ast}\subseteq D''$. These two inclusions together with $H\subseteq E$ imply that $\qum(a_\ast,b_\ast)$ and $\qum(a^\ast,b^\ast)$ are in $K$. Combining this with \eqref{eq:zZjlbRpnGpbpl}, we obtain that $\bup\wedge \gdn\in K$.

Hence, to prove \eqref{eq:fFrdmNgRrxc} and, consequently, \eqref{eq:bwmkhPDhgWrtcndSrt}, it suffices to prove the equality in \eqref{eq:fFrdmNgRrxc}.
Actually, it suffices to show that $\qum(a,b) \geq \bup\wedge \gdn$, as the converse inequality is trivial. To do so, assume that $(p,q)\in \bup\wedge \gdn$. Pick a shortest sequence $\vec x: x_0=p$, $x_1,\dots,x_{h}=q$ 
and a shortest sequence $\vec y: y_0=p$, $y_1,\dots,$ $y_k=q$
of elements of $P$ witnessing that $(p,q)\in \bup= \mu\vee \quo(a^\ast,b^\ast) \vee
\quo(b^\ast,b)$ and $(p,q)\in\gdn=\mu\vee \quo(a,a_\ast) \vee\quo(a_\ast,b_\ast)$, respectively, according \eqref{eq:hNhsntKVnD}. So, for example, for every $j\in\set{1,\dots,h}$, exactly one of $x_{j-1}<x_j$, $(x_{j-1},x_j)=(a^\ast,b^\ast)$, and $(x_{j-1},x_j)=(b^\ast,b)$ holds. 
First, assume that $q\in T_1$ and $(p,q)\in\bup\wedge\gdn\subseteq \bup$. If an element of the sequence $\vec x$ is in $T_2$, then all the \emph{subsequent} elements of $\vec x$ are in $T_2$, contradicting that $q\in T_1$. Hence, no element of $\vec x$ is in $T_2$. So, for all $j\in\set{1,\dots,h}$, \ $(x_{j-1},x_j)$ is neither $(a^\ast,b^\ast)$ nor $(b^\ast,b)$. Hence, $x_{j-1}<x_j$ for all $j\in\set{1,\dots,h}$, and so $(p,q)=(x_0,x_h)\in\mu\subseteq \qum(a,b)$. 
Let us summarize:
\begin{equation}
\text{if $q\in T_1$, then $(p,q)\in\bup$ implies that $(p,q)\in \mu\subseteq \qum(a,b)$.}
\label{eq:mnlPhghrS}
\end{equation}
Second, observe that if an element of  $\vec y$ belongs to $T_1$, then all the \emph{preceding} elements of $\vec y$ do so.
Then, similarly to the argument for \eqref{eq:mnlPhghrS}, we obtain that 
\begin{equation}
\text{if $p\in T_2$, then $(p,q)$ in $\gdn$ implies that $(p,q)\in \mu\subseteq \qum(a,b)$.}
\label{eq:mknhkSkLv}
\end{equation}
By the ``$\in\mu$'' part of \eqref{eq:mnlPhghrS} and \eqref{eq:mknhkSkLv}, each of the sequences $\vec x$ and $\vec y$ implies that 
\begin{equation}
(p,q)\in T_2\times T_1\text{ is impossible.}
\label{eq:mhlfdksRg}
\end{equation} 
We can assume that $p\nleq q$ as otherwise $(p,q)\in\qum(a,b)$ is obvious. But then each of the sequences $\vec x$ and $\vec y$ implies easily that $\set{p,q}\subseteq T_1\cup T_2$. Thus, it follows from \eqref{eq:mnlPhghrS}, \eqref{eq:mknhkSkLv}, and \eqref{eq:mhlfdksRg} that there is only one case left. Namely, from now on in the proof, we assume that $p\in T_1$ and $q\in T_2$. In the sequence $\vec x$, which begins in $T_1$ and terminates in $T_2$, let $x_u$ be the first element that is in $T_2$. As we mentioned in the argument for \eqref{eq:mnlPhghrS}, the subsequent elements $x_{u+1}$, $x_{u+2}$, \dots, $x_h$ are all in $T_2$. Clearly, $x_u=b^\ast\geq b$. Using that, for $j\in\set{u,\dots, h-1}$, either
$(x_j,x_{j+1})=(b^\ast,b)$ (and so $ x_{j+1}=b\geq b$) or $(x_j,x_{j+1})\in\mu$ (and so $x_j\geq b\then x_{j+1}\geq b$), it follows by induction  on $j$ that $q=x_h\geq b$, that is, $q\in \filter b$. Similarly, letting $y_v$ denote the last member of $\vec y$ that belongs to $T_1$, the elements $y_v, y_{v-1}, \dots, y_0=p$ are all in $T_1$. Then $y_v=a_\ast\in\ideal a$.
Using that each of
the pairs $(y_{j-1},y_j)$, $j\in\set{v,v-1,\dots,1}$, is in $\mu$ or is of the form $(a,a_\ast)$, a trivial induction on $j$ ``downwards'' gives that $p=y_0\in\ideal a$. Hence $(a,b)\in\ideal a\times \filter b$. So $(a,b)\in \qum(a,b)$ by \eqref{eq:ddTldmwHnc}.
We have proved \eqref{eq:fFrdmNgRrxc} and \eqref{eq:bwmkhPDhgWrtcndSrt}.

Next, we claim that 
\begin{equation}
\text{for all }a,b\in P,\text{ we have that }  \qum(a,b)\in K.
\label{eq:nmZkskjLbrxNkYx}
\end{equation}
To prove \eqref{eq:nmZkskjLbrxNkYx}, note that 
if $a\leq b$, then $\qum(a,b)=\mu\in K$ is clear by, say, \eqref{eq:knTcldBspThswN}. If $b<a$, then 
$\qum(a,b)\in K$ follows from \eqref{eq:zZjlbRpnGpbpl}.
If $\set{a,b}\subseteq \Selc_i$ for some $i\in\set{1,2}$, then 
\eqref{eq:lCrgMszFlgBa}, \eqref{eq:vgHjvTrVlC}, and \eqref{eq:zZjlbRpnGpbpl} yield that $\qum(a,b)\in K$. 
If there is an $i\in\set{1,2}$ such that $|\set{a,b}\cap\Selc_i|=1$, then \eqref{eq:bwmkhPDhgWrtcndSrt} implies that $\qum(a,b)\in K$. After these considerations, we can assume that $a\parallel b$ and  $\set{a,b}\subseteq \Pdnul=P\setminus(\Selc_1\cup\Selc_2)$.  For $i\in\set{1,2}$, pick an element $c_i\in \Selc_i$. It suffices to show that
\begin{equation}
\qum(a,b)=\bigl(\qum(a,c_1)\vee\qum(c_1,b) \bigr) \wedge \bigl(\qum(a,c_2)\vee\qum(c_2,b) \bigr),
\label{eq:pTftmgkZbtLdj}
\end{equation}
as the term on the right is in $K$ by \eqref{eq:bwmkhPDhgWrtcndSrt}. The ``$\leq$'' in place of the equality sign in \eqref{eq:pTftmgkZbtLdj} is clear by transitivity. For $i\in\set{1,2}$, \eqref{eq:ddTldmwHnc} yields that 
\begin{equation}
\qum(a,c_i)\subseteq \mu \cup \bigl(\ideal a\times \Selc_i\bigr)
\text{ \ and \ }
\qum(c_i,b)\subseteq \mu  \cup \bigl( \Selc_i\times \filter b\bigr).
\label{eq_jTsZnDkhBkMb}
\end{equation} 
Letting 
\[\eta_i:=(\Selc_i\times\Selc_i)\cup (\Pdnul\times \Selc_i)\cup(\Selc_i\times \Pdnul),
\] 
\eqref{eq:hNhsntKVnD} and \eqref{eq_jTsZnDkhBkMb} imply that
\begin{equation}
\qum(a,c_i)\vee\qum(c_i,b)\subseteq
\mu\cup(\ideal a\times\filter b)\cup \eta_i.
\label{eq:grjsKncnTrlTrkh}
\end{equation}
As $\Pdnul$, $\Selc_1$, and $\Selc_2$ are pairwise disjoint,  \eqref{eq:grjsKncnTrlTrkh}  implies that
\begin{align*}
\bigl(\qum(&a,c_1)\vee\qum(c_1,b) \bigr) \wedge \bigl(\qum(a,c_2)\vee\qum(c_2,b) \bigr)\cr 
\subseteq &\bigl( {\mu\cup(\ideal a\times\filter b)}  \mathrel{\cup} \eta_1\bigr) \cap  \bigl( {\mu\cup(\ideal a\times\filter b)}  \mathrel{\cup} \eta_2\bigr)
\cr = &{\mu\cup(\ideal a\times\filter b)} \mathrel{\cup} (\eta_1\cap\eta_2)
\cr = &{\mu\cup(\ideal a\times\filter b)} \mathrel{\cup} {\emptyset} 
\overset{\eqref{eq:ddTldmwHnc}}= \qum(a,b).
\end{align*}
This shows the ``$\geq$'' inequality and the whole \eqref{eq:pTftmgkZbtLdj}. Thus, \eqref{eq:nmZkskjLbrxNkYx} holds. 

Finally, each element of $\Pquleq$ is the join of (possibly infinitely many) elements of the form $\qum(a,b)$ where $a,b\in P$. Therefore, \eqref{eq:nmZkskjLbrxNkYx} implies that  $\Pquleq\subseteq K$, completing the proof of Theorem \ref{thmmain}.  
\end{nextpart}
\end{proof}

\begin{proof}[Proof of Remark \ref{rem:lsCntFl}]
We are going to use some parts of the previous proof.
As $\ncmp=1$, each strip is a singleton and so we can consider it a reversed edge. Apart from this insignificant change, define $G$ in the same way as we did right before \eqref{eq:mGmCrsWthW}.  Let $\bSelc_1=\PP$ and take $Y(\bSelc_1)$ from Definition \ref{def:bzSncsGnd}(E). Changing \eqref{eq:vgHjvTrVlC} to $E:=F(\SSelc_1)\cup G$, it follows from  \eqref{eq:ZlsJdlFRtXgmnRt} and  \eqref{eq:mGmCrsWthW} that $E$ generates $\Pquleq$. As $|E|$ is what Remark \ref{rem:lsCntFl} requires, the proof is complete.
\end{proof}

\section{Some notes on Theorem \ref{thmmain}}\label{sect:correm}
\begin{remark}\label{rem:nGzkVmW} If $\PP$ is an antichain such that $\ncmp\geq 5$ 
is an accessible cardinal, then Theorem \ref{thmmain}(B) says that $\Pquleq$ is $4$-generated. 
\end{remark}

This means that Theorem \ref{thmmain} implies the first half Lemma \ref{lemma:czkLn} for sets with at least five elements.
However, note that the proof of Theorem \ref{thmmain} uses  Lemma \ref{lemma:czkLn}.

Let $\{\PP_i=(P_i;\leq_i)=(P_i;\mu_i)$ : \ $i\in I\}$ be a set of posets that are assumed to be pairwise disjoint. (If they are not disjoint, then we have to replace them by appropriate isomorphic copies.)  The \emph{cardinal sum} \  $\Csum_{i\in I} \PP_i$ of these posets is $(\bigcup_{i\in I} P_i; \bigcup_{i\in I} \mu_i)$. E.g., for $\PP$ in Theorem \ref{thmmain}, $\PP=\Csum_{\TT\in\Comp \PP}\TT$. If $I=\set{1,2}$, then we prefer to write $\PP_1 \cplus \PP_2$.
The following statement is a straightforward consequence  of Theorem \ref{thmmain}.

\begin{corollary}\label{cor:zsbPl}
Assume that $\kappa\geq 5$ is an accessible cardinal. Let $\bbA_2$ denote the $2$-element antichain. If $\PP$ is the cardinal sum of $\kappa$ many chains and the supremum of the lengths
of these chains is $\ncedge\in\Nplu$, 
then $\Pquleq$ and $\Quleq(\PP\cplus \bbA_2)$, as complete lattices, are $\bigl(10+\LASp\ncedge\bigr)$-generated and $\bigl(8+\LASp\ncedge\bigr)$-generated, respectively.
\end{corollary}

The \emph{Y-poset} is the 4-element poset $\set{0,a,b,c}$ such that $0\prec a\prec b$, $a\prec c$, but $b$ and $c$ are incomparable. 
Based on Theorem \ref{thmmain} and Corollary \ref{cor:zsbPl}, we present some examples.

\begin{example}\label{ex:mPkrDsz}
Let $\kappa\geq 5$, let $\PP$ be the cardinal sum of $\kappa$ many chains of length $10^{100}$ each, and let 
$\XX$ be the cardinal sum of $\kappa$ many  Y-posets. Then, as complete lattices, 
$\Pquleq $ is  $347$-generated,  $\Quleq(\PP\cplus \bbA_2)$ is  $345$-generated, $\Quleq(\XX)$ is $\bigl(4\cdot (2\cdot2\cdot2+1\cdot1\cdot 1)+  3+3\bigr)+3=45$-generated, and $\Quleq(\XX\cplus \bbA_2)$ is $\bigl(4\cdot (2\cdot1\cdot1+1\cdot1\cdot 1)+ 0+0\bigr)+3=15$-generated. 
\end{example}

\begin{remark}\label{rem:smRwkvvvthR} 
With few exceptions, we do not know what  the smallest possible size of a generating set of $\Pquleq$ for a  poset $\PP$ in the scope of Theorem \ref{thmmain} is.
We only know that the theorem is sharp for every antichain of size $\ncs\neq4$, see Lemma \ref{lemma:czkLn}, and for every finite chain. 
\end{remark}

\begin{remark}
The proof of our theorem heavily uses Lemma \ref{lemma:czkLn}. For infinite sets, this lemma is taken from Cz\'edli and Kulin \cite{czgkulin}. We could only deal with accessible cardinalities in \cite{czgkulin} and in the  earlier paper Cz\'edli \cite{CzGEq4}, on which  \cite{czgkulin} relies. 
This explains that  Theorem \ref{thmmain} is also restricted to accessible cardinalities.   
\end{remark}

\section{More about the dedication}\label{sect:dedic}
The reader may wonder what happens
if we drop the assumption ``$|P|$ is an accessible cardinal''  from Theorem \ref{thmmain}.  Because of this natural question,
this is the right place to make some comments on the dedication the paper begins with.

\begin{remark}\label{rem:totik}  There are several sources, including
{\tiny{\url{https://en.wikipedia.org/wiki/Vilmos_Totik}}} 
and 
{\tiny{\url{https://mta.hu/koztestuleti_tagok?PersonId=19540}}}
where one can read about Professor Vilmos Totik. As a witness, here
I add three stories; the third story sheds some light on the  question mentioned above.
   First, Vilmos  was one of my roommates in Lor\'and E\"otv\"os Students' Hostel
for (almost) three academic years from 1974 to 1977; his  excellence
was clear for all of us, fellow students, even that time.
  Second,   he received the Researcher of the Year award from
the University of Szeged in the field of physical sciences on November 9,
2019; this was the first occasion when this new award was delivered; see
\href{https://u-szeged.hu/news-and-events/2019/honorary-titles-and}{\tiny{https://u-szeged.hu/news-and-events/2019/honorary-titles-and}} .  
As this link shows, that day was splendid for us also for another reason:
Doctor Honoris Causa title was awarded to Professor Ralph McKenzie, who hardly needs any introduction to the targeted readership.
I still remember the dinner we organized for the evening of that splendid day, where
the two celebrated professors of quite different fields of mathematics
discussed, among other things,  their fishing experiences.
   Third, when (around 1999) I mentioned the basic idea of my proofs
in \cite{CzGEq4} and \cite{czgeek} to Professor Totik,  it took him less than a minute to begin (and few minutes to complete) a proof showing that my method cannot work for all infinite cardinals.
Later, when I forgot his proof, I could not find it again and I needed
a professor of set theory to find the proof again.  
(In spite of many changes in subsequent proofs, see Cz\'edli \cite{czgdaugthent}, Cz\'edli and Kulin \cite{czgkulin}, and Cz\'edli and Oluoch \cite{czgoluoch}, the basic idea of dealing with sets of large cardinalities is, unfortunately, still the same and we seem to be far from strongly inaccessible cardinals.)

To conclude the stories, I wish Professor Vilmos Totik a happy birthday, further successes, and all the best.
\end{remark}

\end{document}